\definecolor{black}{cmyk}{1.,1.,1.,1.0}
\definecolor{blue}{cmyk}{1.,1.,0.,0.63}
\definecolor{green}{cmyk}{1.,0.,1.,0.63}
\newcommand{\medcap}{\mathbin{\scalebox{1.5}{\ensuremath{\cap}}}}
\newcommand{\medcup}{\mathbin{\scalebox{1.5}{\ensuremath{\cup}}}}
\DeclareMathOperator{\reg}{reg}
\DeclareMathOperator{\pr}{pr}
\DeclareMathOperator{\dif}{d}
\DeclareMathOperator{\supp}{supp}
\DeclareMathOperator{\ord}{ord}
\newtheorem{thm}{Theorem}[section]
\newtheorem{lem}{Lemma}[section]
\newtheorem{pro}{Proposition}[section]
\newtheorem{cor}{Corollary}[section]
\theoremstyle{plain}
\numberwithin{equation}{section}
\newcommand{\thistheoremname}{}
\newtheorem*{genericthm*}{\thistheoremname}
\newenvironment{namedthm*}[1]{\renewcommand{\thistheoremname}{#1}%
\begin{genericthm*}}
{\end{genericthm*}}
\newtheoremstyle{named}{}{}{\itshape}{}{\bfseries}{.}{.5em}{\thmnote{#3's }#1}
\theoremstyle{named}
\title{\bf Entire holomorphic curves into projective spaces
\\
intersecting a generic hypersurface of high degree}
\providecommand{\keywords}[1]{\textbf{\textit{Keywords:}} #1}
\providecommand{\subject}[1]{\textbf{\textit{Mathematics Subject Classification 2010:}} #1}
\author{Dinh Tuan HUYNH, Duc-Viet VU and Song-Yan XIE}
\newcommand{\Addresses}{{
\bigskip
\footnotesize
\textsc{Dinh Tuan Huynh, Department of Mathematics, Graduate School of Science, Osaka University, Toyonaka, Osaka 560-0043, Japan \& Department of Mathematics, College of Education, Hue University, 34 Le Loi St., Hue City, Vietnam}

\par\nopagebreak
\noindent
\textit{E-mail address}: \texttt{dinh-tuan.huynh@math.sci.osaka-u.ac.jp}
\newline

\noindent
\textsc{Duc-Viet Vu, Korea institute for advanced study,
 85 Hoegiro, Dongdaemun-gu, Seoul 02455, Republic of Korea}

\noindent
\par\nopagebreak
\noindent
\textit{E-mail address}: \texttt{vuviet@kias.re.kr}
\newline

\noindent
\textsc{Song-yan Xie, Max-Planck-Institut f\"{u}r Mathematik, Vivatsgasse 7, 53111 Bonn, Germany}

\par\nopagebreak
\noindent
\textit{E-mail address}: \texttt{songyan.xie@mpim-bonn.mpg.de}

}}
\date{\vspace{-5ex}}
\begin{document}

\maketitle
\begin{abstract}
In this note, we establish the following Second Main Theorem type estimate for every algebraically nondegenerate entire curve $f\colon\mathbb{C}\rightarrow\mathbb{P}^n(\mathbb{C})$, in presence of a {\sl generic} divisor $D\subset\mathbb{P}^n(\mathbb{C})$ of sufficiently high degree $d\geq 15(5n+1)n^n$: for every $r$ outside a subset of $\mathbb{R}$ of finite Lebesgue measure and every real positive constant $\delta$, we have
\[
T_f(r)
\leq
\,N_f^{[1]}(r,D)
+
O\big(\log T_f(r)\big)
+
\delta \log r,
\]
  where $T_f(r)$ and $N_f^{[1]}(r,D)$ stand for the order function and the $1$-truncated counting function in Nevanlinna theory. This inequality quantifies recent results on the logarithmic Green--Griffiths conjecture.
\end{abstract}
\keywords{Nevanlinna theory}, {Second Main Theorem}, {holomorphic curve}, {Green--Griffiths' conjecture}, {algebraic degeneracy}

\noindent
\subject{32H30}, {32A22}, {30D35}, {32Q45}, 
\section{Introduction and the main result}

We first recall the standard notation in Nevanlinna theory. Let $E=\sum_i\alpha_i\,a_i$ be a divisor on $\mathbb{C}$ where $\alpha_i\geq 0$, $a_i\in\mathbb{C}$ and let
$k\in \mathbb{N}\cup\{\infty\}$. Denote by $\Delta_t$ the disk $\{z\in\mathbb{C},|z|<t\}$. Summing the $k$-truncated degrees of the divisor on disks by
\[
n^{[k]}(t,E)
:=
\sum_{a_i\in\Delta_t}
\min
\,
\{k,\alpha_i\}
\eqno
{{\scriptstyle (t\,>\,0)},}
\]
the \textsl{truncated counting function at level} $k$ of $E$ is then defined by taking the logarithmic average
\[
N^{[k]}(r,E)
\,
:=
\,
\int_1^r \frac{n^{[k]}(t, E)}{t}\,\dif\! t
\eqno
{{\scriptstyle (r\,>\,1)}.}
\]
When $k=\infty$, we write $n(t,E)$, $N(r,E)$ instead of $n^{[\infty]}(t,E)$, $N^{[\infty]}(r,E)$. Let $f\colon\mathbb{C}\rightarrow \mathbb{P}^n(\mathbb{C})$ be an entire curve having a reduced representation $f=[f_0:\cdots:f_n]$ in the homogeneous coordinates $[z_0:\cdots:z_n]$ of $\mathbb{P}^n(\mathbb{C})$. Let $D=\{Q=0\}$ be a divisor in $\mathbb{P}^n(\mathbb{C})$ defined by a homogeneous polynomial $Q\in\mathbb{C}[z_0,\dots,z_n]$ of degree $d\geq 1$. If $f(\mathbb{C})\not\subset D$, we define the \textsl{truncated counting function} of $f$ with respect to $D$ as
\[
N_f^{[k]}(r,D)
\,
:=
\,
N^{[k]}\big(r,(Q\circ f)_0\big),
\]
where $(Q\circ f)_0$ denotes the zero divisor of $Q\circ f$.

The \textsl{proximity function} of $f$ for the divisor $D$ is defined as
\[
m_f(r,D)
\,
:=
\,
\int_0^{2\pi}
\log
\frac{\big\Vert f(re^{i\theta})\big\Vert^d\,
\Vert Q\Vert}{\big|Q(f)(re^{i\theta})\big|}
\,
\frac{\dif\!\theta}{2\pi},
\]
where $\Vert Q\Vert$ is the maximum  absolute value of the coefficients of $Q$ and
\[
\big\Vert f(z)\big\Vert
\,
=
\,
\max
\,
\{|f_0(z)|,\ldots,|f_n(z)|\}.
\]
Since $\big|Q(f)\big|\leq \Vert Q\Vert\cdot\Vert f\Vert^d$, one has $m_f(r,D)\geq 0$.

Lastly, the \textsl{Cartan order function} of $f$ is defined by
\begin{align*}
T_f(r)
\,
:&=
\,
\frac{1}{2\pi}\int_0^{2\pi}
\log
\big\Vert f(re^{i\theta})\big\Vert \dif\!\theta\\
\,
&=
\,
\int_1^r\dfrac{\dif\! t}{t} \int_{\Delta_t}f^*\omega_n
+
O(1),
\end{align*}
where $\omega_n$ is the Fubini--Study form on $\mathbb{P}^n(\mathbb{C})$.

With the above notations, the Nevanlinna theory consists of two fundamental theorems (for a comprehensive presentation, see Noguchi-Winkelmann \cite{Noguchi-Winkelmann2014}).

\begin{namedthm*}{First Main Theorem}\label{fmt} Let $f\colon\mathbb{C}\rightarrow \mathbb{P}^n(\mathbb{C})$ be a holomorphic curve and let $D$ be a hypersurface of degree $d$ in $\mathbb{P}^n(\mathbb{C})$ such that $f(\mathbb{C})\not\subset D$. Then for every $r>1$, the following holds
\[
m_f(r,D)
+
N_f(r,D)
\,
=
\,
d\,T_f(r)
+
O(1),
\]
whence
\begin{equation}
\label{-fmt-inequality}
N_f(r,D)
\,
\leq
\,
d\,T_f(r)+O(1).
\end{equation}
\end{namedthm*}

Hence the First Main Theorem gives an upper bound on the counting function in terms of the order function. On the other side, in the harder part, so-called Second Main Theorem, one tries to establish a lower bound for the sum of certain counting functions. Such types of estimates were given in several situations.

%For non negatively valued functions $\varphi(r)$, $\psi(r)$ we write
%\[
%\varphi(r)
%\,
%\leq
%\,
%\psi(r)\parallel
%\]
%if this inequality holds outside a Borel subset of $(0,+\infty)$ having finite Lebesgue measure. 
Throughout this note, for an entire curve $f,$ the notation $S_f(r)$ means a real function of $r \in \mathbb{R}^+$ such that there is a constant $C$ for which 
$$S_f(r) \le C \, T_f(r)+ \delta \log r$$
for every positive constant $\delta$ and every $r$ outside of a subset (depending on $\delta$) of finite Lebesgue measure of $\mathbb{R}^+$. 

A holomorphic curve $f\colon\mathbb{C}\rightarrow \mathbb{P}^n(\mathbb{C})$ is said to be \textsl{algebraically (linearly) nondegenerate} if its image is not contained in any hypersurface (hyperplane). A family of $q\geq n+1$ hypersurfaces $\{D_i\}_{1\leq i\leq q}$ in $\mathbb{P}^n(\mathbb{C})$ is in {\sl general position} if any $n+1$ hypersurfaces in this family have empty intersection:
\[
\underset{i\in I}{\medcap}\,\supp(D_i)=\emptyset \eqno
{\scriptstyle (\forall\,I\,\subset\,\{1,\dots,q\},\, |I|=n+1).}
\]

We recall here the following classical result \cite{cartan1933}, with truncation level $n$.
 
\begin{namedthm*}{Cartan's Second Main Theorem}\label{smt_cartan}
Let $f\colon\mathbb{C}\rightarrow \mathbb{P}^n(\mathbb{C})$ be a linearly nondegenerate holomorphic curve and let $\{H_i\}_{1\leq i\leq q}$ be a family of $q>n+1$ hyperplanes in general position in $\mathbb{P}^n(\mathbb{C})$. Then the following estimate holds
\[
(q-n-1)
\,
T_f(r)
\,
\leq
\,
\sum_{i=1}^q N_f^{[n]}(r,H_i)+S_f(r).
\]
\end{namedthm*}

In the one-dimensional case, Cartan recovered the classical Nevanlinna theory for nonconstant meromorphic functions and families of $q>2$ distinct points. Since then, many author tried to extend the result of Cartan to the case of (possible) nonlinear hypersurface. Eremenko-Sodin \cite{Eremenko-Sodin1992} established a Second Main Theorem for $q>2n$ hypersurfaces $D_i$ ($1\leq i\leq q$) in general position in $\mathbb{P}^n(\mathbb{C})$ and for any nonconstant holomorphic curve $f\colon\mathbb{C}\rightarrow \mathbb{P}^n(\mathbb{C})$ whose image is not contained in $\cup_{1\leq i\leq q}\supp(D_i)$. Keeping the same assumption on $q>n+1$ hypersurfaces, Ru \cite{Minru2004} proved a stronger estimate for algebraically nondegenerate holomorphic curves $f\colon\mathbb{C}\rightarrow \mathbb{P}^n(\mathbb{C})$. He then extended this result to the case of algebraically nondegenerate holomorphic mappings into an arbitrary nonsingular complex projective variety \cite{Minru2009}. Note that it remains open the question of truncating the counting functions in the above generalizations of Cartan's Second Main Theorem. Some results in this direction are obtained recently but one requires the presence of more targets, see for instance \cite{An-Quang-Thai2013}, \cite{Thai-Viet2013}.

In the other context, Noguchi-Winkelmann-Yamanoi \cite{Noguchi-Winkelmann-Yamanoi2002} established a Second Main Theorem for algebraically nondegenerate holomorphic curves into semiabelian varieties intersecting an effective divisor. Yamanoi \cite{Yamanoi2004} obtained a similar result in the case of abelian varieties with the best truncation level $1$, which is extended to the case of semiabelian varieties by Noguchi-Winkelmann-Yamanoi \cite{Noguchi-Yamanoi-Winkelmann2008}.

In the qualitative aspect, the (strong) Green-Griffiths conjecture stipulates that if $X$ is a complex projective space of general type, then there exists a proper subvariety $Y\subsetneq X$ containing the image of every nonconstant entire holomorphic curve $f\colon\mathbb{C}\rightarrow X$.

Following a beautiful strategy of Siu \cite{Siu2004}, Diverio, Merker and Rousseau \cite{DMR2010} confirmed this conjecture for generic hypersurface $D \subset\mathbb{P}^{n+1}$ of degree $d\geq 2^{n^5}$. Berczi \cite{Berczi2015} improved the degree bound to $d\geq n^{9n}$. Demailly \cite{Demailly2012} gave a new degree bound
\[
d
\geq
\frac{n^4}{3}\bigg(
n\log(n\log(24n))
\bigg)^n.
\]

In the logarithmic case, namely for the complement of a hypersurface $D\subset\mathbb{P}^n(\mathbb{C})$, there is another variant of this conjecture, so-called the logarithmic Green-Griffiths conjecture, which expects that for a generic hypersurface $D \subset\mathbb{P}^n(\mathbb{C})$ having degree $d\geq n+2$, there should exist a proper subvariety $Y\subset\mathbb{P}^n(\mathbb{C})$ containing the image of every nonconstant entire holomorphic curve $f\colon\mathbb{C}\rightarrow\mathbb{P}^n
(\mathbb{C})\setminus D$. Darondeau \cite{Darondeau2016} gave a positive answer for this case with effective degree bound
\[
d
\geq
(5n)^2n^n.
\]

In this note, we show that the current method towards the Green-Griffiths conjecture can yield {\em not only qualitative but also quantitative result}, namely a Second Main Theorem type estimate in presence of {\em only one generic} hypersurface $D$ of sufficiently high degree with the truncation level $1$.

\begin{namedthm*}{Main Theorem}
Let $D \subset \mathbb{P}^{n}(\mathbb{C})$ be a generic divisor having degree 
\[
d
\geq
15(5n+1)n^n
.
\]
Let $f\colon\mathbb{C}\rightarrow\mathbb{P}^{n}(\mathbb{C})$ be an entire holomorphic curve. If $f$ is algebraically nondegenerate, then the following estimate holds
\[
T_f(r)
\leq
\,
N_f^{[1]}(r,D)
+
S_f(r).
\]
%where 
%\[
%S_f(r)= O\big(\log T_f(r)+\log r\big)\parallel
%\]
%is a small term compared with the order function.
\end{namedthm*}

For background and standard techniques in Nevanlinna theory, we use the book of Noguchi-Winkelmann \cite{Noguchi-Winkelmann2014} as our main reference. The proof of the existence of logarithmic jet differentials in the last part of this note is based on the work of Darondeau \cite{Darondeau2016}.

\section*{Acknowledgments}
The authors would like to thank Jo\"{e}l Merker for his encouragements and his comments that greatly improved the manuscript. We would like to thank Nessim Sibony for very fruitful discussions on the paper \cite{Siboni-Paun2014}. We want to thank Junjiro Noguchi, Katsutoshi Yamanoi, Yusaku Tiba and Yuta Kusakabe for their interests in our work and for listening through many technical details. We would like to thank the referee for his/her careful reading of the manuscript and helpful suggestions. The first author is supported by the fellowship of the Japan Society for the Promotion of Science and the Grant-in-Aid for JSPS fellows Number 16F16317.

\section{Logarithmic jet differentials}
\subsection{Logarithmic Green-Griffiths $k$-jet bundle}

The general strategy to prove the logarithmic Green-Griffiths conjecture consists of two steps. The first one is to produce many algebraically independent differential equations that all holomorphic curve $f\colon\mathbb{C}\rightarrow \mathbb{P}^n(\mathbb{C})\setminus D$ must satisfy. The second step consists in producing enough jet differentials from an initial one such that from the corresponding algebraic differential equations, one can eliminate all derivative in order to get purely algebraic equations.

The central geometric object corresponding to the algebraic differential equations is the logarithmic Green-Griffiths $k$-jet bundle constructed as follows. Let $X$ be a complex manifold of dimension $n$. For a point $x\in X$, consider the holomorphic germs $(\mathbb{C},0)\rightarrow (X,x)$. Two such germs are said to be equivalent if they have the same Taylor expansion up to order $k$ in some local coordinates around $x$. The equivalence class of a holomorphic germ $f\colon (\mathbb{C},0)\rightarrow (X,x)$ is called the {\sl $k$-jet of $f$}, denote $j_k(f)$. A $k$-jet $j_k(f)$ is said to be {\sl regular} if $f'(0)\not=0$. For a point $x\in X$, denote by $j_k(X)_x$ the vector space of all $k$-jets of holomorphic germs $(\mathbb{C},0)\rightarrow (X,x)$. Set
\[
J_k(X)
:=
\underset{x\in X}{\medcup}\,J_k(X)_x
\]
and consider the natural projection
\[
\pi_k\colon J_k(X)\rightarrow X.
\]
Then $J_k(X)$ is a complex manifold which carries the structure of a holomorphic fiber bundle over $X$, which is called the {\sl $k$-jet bundle over $X$}. When $k=1$, $J_1(X)$ is canonically isomorphic to the holomorphic tangent bundle $T_X$ of $X$.

For an open subset $U\subset X$, for a section $\omega\in H^0(U,T_X^*)$, for a $k$-jet $j_k(f)\in J_k(X)|_U$, the pullback $f^*\omega$ is of the form $A(z)dz$ for some holomorphic function $A$. Since each derivative $A^{(j)}$ ($0\leq j\leq k-1$) is well defined, independent of the representation of $f$ in the class $j_k(f)$, the holomorphic $1$-form $\omega$ induces the holomorphic map
\begin{equation}
\label{trivialization-jet}
\tilde{\omega}
\colon
J_k(X)|_U
\rightarrow
\mathbb{C}^k;\,\,j_k(f)\rightarrow
\big(A(z),A(z)^{(1)},\dots,A(z)^{(k-1)}
\big).
\end{equation}
Hence on an open subset $U$, a local holomorphic coframe $\omega_1\wedge\dots\wedge\omega_n\not=0$ yields a trivialization $H^0(U, J_k(X))\rightarrow U\times(\mathbb{C}^k)^n$ by giving new $n k$ independent coordinates:
\[
\sigma\rightarrow(\pi_k\circ\sigma;\tilde{\omega}_1\circ\sigma,\dots,\tilde{\omega}_n\circ\sigma),
\]
where $\tilde{\omega}_i$ are defined as in \eqref{trivialization-jet}. The components $x_i^{(j)}$ ($1\leq i\leq n$, $1\leq j\leq k$) of $\tilde{\omega}_i\circ\sigma$ are called {\sl jet-coordinates}.
In a more general setting where $\omega$ is a section over $U$ of the sheaf of meromorphic $1$-forms, the induced map $\tilde{\omega}$ is meromorphic.

Now, in the logarithmic setting, let $D\subset X$ be a normal crossing divisor on $X$. This means that at each point $x\in X$, there exist some local coordinates $z_1,\dots,z_{\ell},z_{\ell+1},\dots,z_n$ ($\ell=\ell(x)$) centered at $x$ in which $D$ is defined by
\[
D=
\{z_1\dots z_{\ell}=0
\}.
\]
Following Iitaka \cite{Iitaka1982}, the {\sl logarithmic cotangent bundle of $X$ along $D$}, denoted by $T_X^*(\log D)$, corresponds to the locally-free sheaf generated by
\[
\dfrac{\dif\!z_1}{z_1},\dots,\dfrac{\dif\! z_{\ell}}{z_{\ell}},z_{\ell +1},\dots,z_n
\]
in the above local coordinates around $x$.

A holomorphic section $s\in H^0(U,J_k(X))$ over an open subset $U\subset X$ is said to be a {\sl logarithmic $k$-jet field} if $\tilde{\omega}\circ s$ are holomorphic for all sections $\omega\in H^0(U',T_X^*(\log D))$, for all open subsets $U'\subset U$, where $\tilde{\omega}$ are induced maps defined as in \eqref{trivialization-jet}. Such logarithmic $k$-jet fields define a
subsheaf of $J_k(X)$, and this subsheaf is itself a sheaf of sections of a holomorphic fiber bundle over $X$, called the {\sl logarithmic $k$-jet bundle over $X$ along $D$}, denoted by $J_k(X,-\log D)$ \cite{Noguchi1986}.

The group $\mathbb{C}^*$ acts fiberwise  on the jet bundle  as follows. For local coordinates 
\[
z_1,\dots,z_{\ell},z_{\ell+1},\dots,z_n\eqno\scriptstyle{(\ell=\ell(x))}
\]
centered at $x$ in which $D=\{z_1\dots z_{\ell}=0\}$, for any logarithmic $k$-jet field along $D$ represented by some germ $f=(f_1,\dots,f_n)$, if $\varphi_{\lambda}(z)=\lambda z$ is the homothety with ratio $\lambda\in \mathbb{C}^*$, the action is given by
\[
\begin{cases}
\big(\log(f_i\circ \varphi_{\lambda})\big)^{(j)}
=
\lambda^j
\big(\log f_i\big)^{(j)}\circ\varphi_{\lambda} &
\quad
\scriptstyle{(1\,\leq\,i\,\leq\,\ell),}
\\
\big(f_i\circ \varphi_{\lambda}\big)^{(j)}
\quad\quad\,=
\lambda^j f_i^{(j)}\circ\varphi_{\lambda}
&
\quad
\scriptstyle{(\ell+1\,\leq\,i\,\leq\,n).}
\end{cases}
\]

Now we are ready to introduce the {\sl Green-Griffiths $k$-jet bundle} \cite{Green-Griffiths1980} in the logarithmic setting. A logarithmic jet differential of {\sl order} $k$ and {\sl degree} $m$ at a point $x\in X$ is a polynomial $Q(f^{(1)},\dots,f^{(k)})$ on the fiber over $x$ of $J_k(X,-\log D)$ enjoying weighted homogeneity:
\[
Q(j_k(f\circ\varphi_{\lambda}))
=
\lambda^m
Q(j_k(f)).
\]
Denote by $E_{k,m}^{GG}T_X^*(\log D)_x$ the vector space of such polynomials and set
\[
E_{k,m}^{GG}T_X^*(\log D)
:=
\underset{x\in X}{\medcup}\,
E_{k,m}^{GG}T_X^*(\log D)_x.
\]
By Fa\`{a} di bruno's formula \cite{Constantine1996}, \cite{Merker2015}, $E_{k,m}^{GG}T_X^*(\log D)$ carries the structure of a vector bundle over $X$, called {\sl logarithmic Green-Griffiths vector bundle}. A global section $\mathscr{P}$ of $E_{k,m}^{GG}T_X^*(\log D)$ locally is of the following type in jet-coordinates $x_i^{(j)}$:
{\footnotesize
\[
\underset{|\alpha_1|+2|\alpha_2|+\dots+k|\alpha_k|=m}
{\sum_{\alpha_1,\dots,\alpha_k\in\mathbb{N}^n}}
A_{\alpha_1,\dots,\alpha_k}
\bigg(
\prod_{i=1}^{\ell}
\big(
(\log x_i)^{(1)}
\big)^{\alpha_{1,i}}
\prod_{i=\ell+1}^{n}
\big(
(x_i)^{(1)}
\big)
^{\alpha_{1,i}}
\bigg)
\dots
\bigg(
\prod_{i=1}^{\ell}
\big(
(\log x_i)^{(k)}
\big)^{\alpha_{k,i}}
\prod_{i=\ell+1}^{n}
\big(
(x_i)^{(k)}
\big)
^{\alpha_{k,i}}
\bigg),
\]
}
where
\[
\alpha_{\lambda}
=
(\alpha_{\lambda,1},\dots,\alpha_{\lambda,n})
\in\mathbb{N}^n
\eqno
\scriptstyle{(1\,\leq\,\lambda\,\leq\, k)}
\]
are multi-indices of length 
\[
|\alpha_{\lambda}|
=
\sum_{1\leq i\leq n}
\alpha_{\lambda,i},
\]
and where $A_{\alpha_1,\dots,\alpha_k}$ are locally defined holomorphic functions.

By the following classical result, the first step to prove the Green-Griffiths conjecture reduces to finding logarithmic jet differentials valued in the dual of some ample line bundle. 

\begin{namedthm*}{Fundamental vanishing theorem} (\cite{Demailly1997}, \cite{Dethloff-Lu2001})
Let $X$ be a smooth complex projective variety and let $D\subset X$ be a normal crossing divisor on $X$. If $\mathscr{P}$ is a nonzero global holomorphic logarithmic jet differential along $D$ vanishing on some ample line bundle $\mathscr{A}$ on $X$, namely if
\[
0\not=\mathscr{P}
\in
H^0\big(
X,
E_{k,m}^{GG}T_X^*(\log D)
\otimes
\mathscr{A}^{-1}
\big),
\]
then all nonconstant holomorphic curves $f:\mathbb{C}\rightarrow X\setminus D$ must satisfy the associated differential equation
\begin{equation}
\label{alg.diff.equation}
\mathscr{P}
\big(j_k(f)
\big)
\equiv 0.
\end{equation}
\end{namedthm*}

In the compact case, the existence of such global sections has been proved recently, first by Merker \cite{Merker2015} for the case of smooth hypersurfaces of general type in $\mathbb{P}^n(\mathbb{C})$, and later for arbitrary general projective variety by Demailly \cite{Demailly2011}. Adapting this technique in the logarithmic setting, Darondeau \cite{Darondeau2016} obtained a similar result for smooth hypersurface in projective space, provided that the degree is high enough compared with the dimension.

\begin{pro} (\cite[Th. 1.2]{Darondeau2016})
\label{existence-logarimith-jet-differential-lionel}
Let $c\in\mathbb{N}$ be a positive integer and let $D\subset\mathbb{P}^n(\mathbb{C})$ be a smooth hypersurface having degree
\[
d
\geq
15(c+2)
\,
n^n.
\]
For jet order $k=n$, for weighted degrees $m\gg d$
big enough, the vector space of global logarithmic jet differentials along $D$ of order $k$ and weighted degree $m$ vanishing on the $m$-th tensor power of the ample line bundle $\mathcal{O}_{\mathbb{P}^n(\mathbb{C})}(c)$ has positive dimension:
\[
\dim
\,
H^0
\big(
\mathbb{P}^n(\mathbb{C}),
E_{n,m}^{GG}T_{\mathbb{P}^n(\mathbb{C})}^*(\log D)
\otimes
\mathcal{O}_{\mathbb{P}^n(\mathbb{C})}(c)^{-m}
\big)
>
0.
\]
\end{pro}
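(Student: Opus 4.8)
\emph{Overall strategy.} The plan is to prove this by the holomorphic Morse-inequality method of Demailly, in the effective logarithmic form of Darondeau (which is precisely what the proof in this note will follow). The first move is to trade the Green--Griffiths bundle for the tautological line bundle on the logarithmic Demailly--Semple jet tower. Starting from the directed pair $(\mathbb{P}^n(\mathbb{C}),V)$, where $V$ is the dual of $T^*_{\mathbb{P}^n(\mathbb{C})}(\log D)$, one builds inductively a tower of $\mathbb{P}^{n-1}$-bundles
\[
\pi_k\colon X_k\longrightarrow X_{k-1}\longrightarrow\cdots\longrightarrow X_0=\mathbb{P}^n(\mathbb{C}),\qquad \dim X_k=n+k(n-1),
\]
carrying tautological line bundles $\mathcal{O}_{X_k}(1)$ with first Chern classes $u_1,\dots,u_k$. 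For a suitable admissible weight $\mathbf{a}=(a_1,\dots,a_k)\in\mathbb{N}^k$, Demailly's identification expresses the global invariant (hence \emph{a fortiori} Green--Griffiths, since the Demailly--Semple subbundle embeds into $E^{GG}_{k,m}T^*_X(\log D)$) logarithmic jet differentials of the appropriate weighted degree as sections of $\mathcal{O}_{X_k}(\mathbf{a})$. Thus, after rescaling the weighted degree by $|\mathbf{a}|$, it suffices to exhibit one nonzero section of the $\mathbb{Q}$-line bundle $L:=\mathcal{O}_{X_n}(\mathbf{a})\otimes\pi_n^*\mathcal{O}_{\mathbb{P}^n(\mathbb{C})}(-c)$ on the smooth projective variety $X_n$, whose dimension is $N:=n+n(n-1)=n^2$.

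\emph{Reduction to a single intersection number.} Next I would invoke the algebraic holomorphic Morse inequalities: if $L=F-G$ with $F,G$ nef $\mathbb{Q}$-divisors on a smooth projective $N$-fold, then $h^0(mL)\geq \tfrac{m^N}{N!}\big(F^N-N\,F^{N-1}\cdot G\big)-o(m^N)$. The tautological class $\mathcal{O}_{X_n}(\mathbf{a})$ is not itself nef, but by the standard positivity lemma for jet towers $\mathcal{O}_{X_n}(\mathbf{a})+\pi_n^*\mathcal{O}_{\mathbb{P}^n(\mathbb{C})}(b)$ is nef once $b$ is large enough (effectively) in terms of $\mathbf{a}$. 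Writing
\[
L=\underbrace{\big(\mathcal{O}_{X_n}(\mathbf{a})+\pi_n^*\mathcal{O}(b)\big)}_{F}-\underbrace{\pi_n^*\mathcal{O}(b+c)}_{G},
\]
the existence of a section for all sufficiently large and divisible $m$ follows as soon as $F^{N}-N\,F^{N-1}\cdot\pi_n^*\mathcal{O}_{\mathbb{P}^n(\mathbb{C})}(b+c)>0$.

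\emph{Computing the intersection numbers.} I would then expand $F^N$ and $F^{N-1}\cdot\pi_n^*h$ (with $h$ the hyperplane class) as polynomials in $u_1,\dots,u_n$ and $\pi_n^*h$, and push forward from $X_n$ to $\mathbb{P}^n(\mathbb{C})$ using the explicit Darondeau fiber-integration formulas for $\pi_{n*}(u_1^{s_1}\cdots u_n^{s_n})$. This re-expresses everything in terms of the Chern classes of $T_{\mathbb{P}^n(\mathbb{C})}(-\log D)$, which on $\mathbb{P}^n(\mathbb{C})$ are governed by $d$ through $0\to T_{\mathbb{P}^n(\mathbb{C})}(-\log D)\to T_{\mathbb{P}^n(\mathbb{C})}\to i_*\mathcal{O}_D(d)\to 0$; in particular $c_1\big(T^*_{\mathbb{P}^n(\mathbb{C})}(\log D)\big)=(d-n-1)h$. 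The quantity $F^N-N\,F^{N-1}\cdot\pi_n^*\mathcal{O}(b+c)$ thereby becomes an explicit polynomial in $d$, $c$ and the free parameters $\mathbf{a},b$; choosing $\mathbf{a}$ as the geometrically natural rapidly-decreasing weight, optimizing $b$, and estimating the subleading terms in $d$, one checks that it is strictly positive whenever $d\geq 15(c+2)n^n$.

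\emph{Main obstacle.} The serious content is entirely in this last step. Carrying out the pushforward on the order-$n$ tower in dimension $n^2$, and in particular tuning the weights and bounding \emph{all} lower-order contributions tightly enough to reach precisely the clean threshold $15(c+2)n^n$ rather than a cruder exponential bound, is the combinatorial heart of the argument; the proof reproduces Darondeau's computation, which in turn mirrors Merker's analysis of the compact hypersurface case, the only structural new input being the systematic replacement of $T^*_{\mathbb{P}^n(\mathbb{C})}$ by $T^*_{\mathbb{P}^n(\mathbb{C})}(\log D)$. It is exactly the $d$-dependence of the logarithmic Chern classes (with $c_1$ ample already for $d\geq n+2$) that overwhelms the negative Morse term $-N F^{N-1}\cdot G$ and still leaves enough room to absorb the twist by $\mathcal{O}_{\mathbb{P}^n(\mathbb{C})}(c)$.
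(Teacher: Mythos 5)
This proposition is not proved in the paper you are reading: it is quoted verbatim as \cite[Th.~1.2]{Darondeau2016} and used as a black box (in Lemma~\ref{extend-holomorphically-section} and Theorem~\ref{the_existence_jet}). There is therefore no in-paper argument against which to compare your proposal.

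As a reconstruction of the argument in the cited source, your outline hits the right landmarks: replacing $E^{GG}_{n,m}T^*_{\mathbb{P}^n}(\log D)$ by its invariant sub-bundle realized on the logarithmic Demailly--Semple tower $\pi_n\colon X_n\to\mathbb{P}^n$, invoking the algebraic holomorphic Morse inequality for a nef decomposition $L=F-G$, pushing the intersection numbers down by fiber integration, and tracking the $d$-dependence through $c_1\big(T^*_{\mathbb{P}^n}(\log D)\big)=(d-n-1)h$ so that the leading Morse term eventually dominates. All of that is structurally consistent with Darondeau's approach (which itself extends Diverio--Merker--Rousseau and Merker to the log setting). However, you explicitly defer the entire quantitative substance --- the choice of weight $\mathbf{a}$, the effective nef twist $b$, and the control of \emph{all} lower-order contributions in the degree-$n^2$ intersection polynomial --- with the acknowledgement that this is ``the combinatorial heart.'' That deferred computation is precisely what turns the heuristic into the stated threshold $d\geq 15(c+2)n^n$, and nothing in your outline constrains why the constant comes out to $15(c+2)n^n$ rather than something cruder. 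So: as a mirror of how the paper handles this result (citation, not proof), your proposal is appropriate; as a stand-alone proof it remains a stub at the decisive step.
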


\section{Second Main Theorem for logarithmic jet differential}

Let $D\subset\mathbb{P}^n(\mathbb{C})$ be a smooth hypersurface in $\mathbb{P}^n(\mathbb{C})$. Let $f:\mathbb{C}\rightarrow\mathbb{P}^n(\mathbb{C})$ be an entire holomorphic curve, not necessary in the complement of $D$. If there exists a global logarithmic jet differential $\mathscr{P}$ which does not satisfy \eqref{alg.diff.equation}, then the fundamental vanishing theorem guarantees that the curve $f$ must intersect the hypersurface $D$. Furthermore, in the quantitative aspect, based on the proof of the fundamental vanishing theorem, it is known that a Second Main Theorem type estimate
\[
T_f(r)
\leq
C\,N_f(r,D)
+
S_f(r)
\]
can be deduced from the existence of such global section $\mathscr{P}$. There are several variants of the above estimate, see for instance in \cite{Siboni-Paun2014}, \cite{Siu2015}. Here we provide more information about the constant $C$ and truncation of the counting function.

Before going to introduce the main result of this section, we need to recall the following lemma on logarithmic derivative which is a crucial tool  in Nevanlina theory. %the proof of the Second Main Theorem for meromorphic functions on the complex plane \cite{Nevanlinna1925}.

\begin{namedthm*}{Logarithmic derivative Lemma}
Let $g\not\equiv 0$ be a nonzero meromorphic function on $\mathbb{C}$. For any integer $k\geq 1$, we have
\[
m_{\frac{g^{(k)}}{g}}
(
r
)
:=
m_{\frac{g^{(k)}}{g}}
(
r,\infty
)
=
S_g(r).
\]
%where $S_g(r)$ is a small term compared with $T_g(r)$:
%\[
%S_g(r)
%\,
%=
%\,
%o(T_g(r))\parallel\!\!.
%\]
\end{namedthm*}

We refer to \cite[Lem. 4.7.1]{Noguchi-Winkelmann2014} for a more general version of the above Lemma. Here is our main result in this section.

\begin{thm}
\label{smt-form-logarithmic-diff-jet}
Let $f\colon\mathbb{C}\rightarrow\mathbb{P}^n
(\mathbb{C})$ be an entire curve and let $D\subset\mathbb{P}^n(\mathbb{C})$ be a smooth hypersurface. Let $\tilde{m}$ be a positive integer. If there exists a global logarithmic jet differential
\[
\mathscr{P}
\in
H^0
\big(
\mathbb{P}^n(\mathbb{C}),
E_{k,m}^{GG}T_{\mathbb{P}^n(\mathbb{C})}^*(\log D)\otimes\mathcal{O}_{\mathbb{P}^n(\mathbb{C})}(1)^{- \widetilde{m}}
\big)
\]
 such that
\[
\mathscr{P}
\big( j_k(f)
\big)
\not\equiv 0,
\]
then the following Second Main Theorem type estimate holds:
\[
T_f(r)
\leq
\frac{m}{\widetilde{m}}\,N^{[1]}_f(r,D)
+
S_f(r).
\]
%where 
%\[
%S_f(r)= O\big(\log T_f(r)
%+
%\log r
%\big)\parallel
%\]
%is a small term compared with the order function.
\end{thm}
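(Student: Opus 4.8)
The plan is to follow the classical strategy for turning a nonzero jet differential with values in a negative power of an ample line bundle into a Second Main Theorem, now keeping careful track of both the multiplicative constant and the truncation level $1$. Write $\mathscr P \in H^0\bigl(\mathbb P^n(\mathbb C), E_{k,m}^{GG}T^*_{\mathbb P^n(\mathbb C)}(\log D)\otimes \mathcal O(1)^{-\widetilde m}\bigr)$ and set $g := \mathscr P(j_k(f))$, a (not identically zero) meromorphic function on $\mathbb C$ by hypothesis. The section $\mathscr P$ can be viewed, after trivializing, as a logarithmic jet differential of weighted degree $m$ whose coefficients are sections of $\mathcal O(m - \widetilde m\cdot(\deg D)\,)$ — more precisely, using the standard isomorphism $E_{k,m}^{GG}T^*_{\mathbb P^n}(\log D) \cong E_{k,m}^{GG}T^*_{\mathbb P^n}\otimes \mathcal O(\text{correction involving } D)$, one expresses $g$ as a universal polynomial in the logarithmic derivatives $(\log Q(f))^{(j)}$ and in $\log\|f\|$-type quantities. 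The first key step is the pointwise estimate
\[
\log |g(z)| \le \widetilde m \log \|f(z)\| - \widetilde m \log \|f(z)\| + (\text{logarithmic derivative terms}),
\]
i.e. more honestly: because $\mathscr P$ takes values in $\mathcal O(1)^{-\widetilde m}$, comparing $g$ with the natural metrics gives
\[
\log|g(z)| \le -\widetilde m \log\|f(z)\| + \sum_{j} O\!\Bigl(\log^{+}\bigl|\tfrac{(Q\circ f)^{(j)}}{Q\circ f}\bigr|\Bigr) + \sum_{i,j} O\!\Bigl(\log^{+}\bigl|\tfrac{f_i^{(j)}}{f_i}\bigr|\Bigr) + O(1).
\]

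The second step integrates this over circles $|z| = r$. The left-hand side is controlled below by the Jensen formula: $\tfrac{1}{2\pi}\int_0^{2\pi}\log|g(re^{i\theta})|\,d\theta = N(r,(g)_0) - N(r,(g)_\infty) + O(1)$. The term $-\widetilde m\log\|f\|$ integrates, up to sign, to $-\widetilde m\, T_f(r)$. Every logarithmic derivative term integrates to $S_f(r)$ by the Logarithmic derivative Lemma recalled above (applied to $g = Q\circ f$ and to $g = f_i$, whose order functions are $O(d\,T_f(r))$ and $O(T_f(r))$ respectively, hence absorbed into $S_f(r)$). Rearranging yields
\[
\widetilde m\, T_f(r) \le N\bigl(r,(g)_\infty\bigr) - N\bigl(r,(g)_0\bigr) + S_f(r) \le N\bigl(r,(g)_\infty\bigr) + S_f(r).
\]
So the whole game reduces to bounding the pole divisor of $g$ by $m\, N_f^{[1]}(r,D)$.

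The third and decisive step is the local analysis of the poles of $g = \mathscr P(j_k(f))$. Since $\mathscr P$ is a genuine global section of a vector bundle on the compact manifold $\mathbb P^n(\mathbb C)$, the only way $g$ can have a pole is at a point $z_0$ where $f(z_0) \in D$: away from $D$ the logarithmic jet differential is holomorphic along $f$. Near such a $z_0$, choose local coordinates where $D = \{z_1 = 0\}$ and write $Q\circ f = u\cdot z_1^{\nu}$ locally with $u$ a unit, $\nu = \nu_{z_0}$ the vanishing order. The logarithmic $1$-forms $dz_1/z_1$ pull back under $f$ to $\nu\, dz/z + (\text{holomorphic})$, which has a \emph{simple} pole regardless of how large $\nu$ is; likewise $(\log z_1)^{(j)}\circ f$ has a pole of order at most $j$ at $z_0$, again independently of $\nu$. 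Feeding these into the weighted-homogeneous polynomial $\mathscr P$ of total weighted degree $m$, the order of the pole of $g$ at $z_0$ is at most $m$ — and, crucially, this bound is \emph{uniform} in $\nu$: it only sees that $f$ meets $D$ at $z_0$, not the multiplicity. Therefore
\[
n\bigl(t,(g)_\infty\bigr) \le m\sum_{z_0\in\Delta_t,\ f(z_0)\in D} 1 = m\, n_f^{[1]}(t, D),
\]
whence $N(r,(g)_\infty) \le m\, N_f^{[1]}(r,D)$. Combining with the previous step and dividing by $\widetilde m$ gives $T_f(r) \le \tfrac{m}{\widetilde m} N_f^{[1]}(r,D) + S_f(r)$, as claimed.

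The main obstacle is precisely this last local computation: one must verify carefully that a logarithmic jet differential, evaluated along $f$ at a point where $f$ hits $D$ with possibly huge multiplicity, still produces a pole whose order is bounded by the weighted degree $m$ and is insensitive to the multiplicity. This is where the \emph{logarithmic} structure (as opposed to ordinary jet differentials) is essential: a non-logarithmic jet differential composed with $f$ would acquire poles whose order grows with the intersection multiplicity, and the truncation level $1$ on the right-hand side would be lost. Concretely, I would argue via the action of $\mathbb C^*$ and the explicit shape of a section in jet-coordinates displayed in the excerpt — each factor $(\log x_i)^{(j)}$ contributes a pole of order $\le j$ and the weights $1|\alpha_1| + 2|\alpha_2| + \cdots + k|\alpha_k| = m$ force the total pole order to be $\le m$, with the holomorphic coefficients $A_{\alpha_1,\dots,\alpha_k}$ (which, after the global twist by $\mathcal O(1)^{-\widetilde m}$, become sections that are holomorphic along $f$) contributing nothing to the poles. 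A secondary technical point is bookkeeping the contribution of $\log\|f\|$ versus the metric on $\mathcal O(1)^{-\widetilde m}$ so that exactly the coefficient $-\widetilde m$ emerges in front of $T_f(r)$ after integration; this is routine once the First Main Theorem and the definition of $T_f$ are invoked, but it is what pins down the ratio $m/\widetilde m$.
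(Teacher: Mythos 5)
Your proof is correct and follows essentially the same approach as the paper's: both hinge on observing that the pole divisor of $\mathscr{P}(j_k(f))$ is bounded by $m$ times the truncation-$1$ pullback of $D$ (since each $(\log f_1)^{(\ell)}$ contributes pole order at most $\ell$ independently of the intersection multiplicity $\nu$), and on the logarithmic derivative lemma to absorb the remaining integral into $S_f(r)$. Your use of Jensen's formula with the trivialization of $f^*\mathcal{O}(1)^{-\tilde m}$ by the reduced representation is just the integrated form of the paper's Poincar\'e--Lelong argument with the Fubini--Study metric, and the paper's partition-of-unity reduction to local charts (rather than your global pointwise estimate in terms of $Q\circ f$ and $f_i$) is a presentational rather than a structural difference.
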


\begin{proof}
Our  proof is partly  based on \cite[Lem. 4.7.1]{Noguchi-Winkelmann2014} and \cite{Demailly1997}. Let $Q$ be the irreducible homogeneous polynomial defining $D$. By assumption, $\mathscr{P}
\big(
j_k(f)
\big)$ is a nonzero meromorphic section of $f^* \mathcal{O}_{\mathbb{P}^n(\mathbb{C})}(1)^{-\widetilde{m}}$. Let $D_{\mathscr{P},f}$ be the  pole divisor of $\mathscr{P}\big(j_k(f)\big)$.

Let $\big(V,\phi)$ be a small enough local chart  of $\mathbb{P}^n(\mathbb{C})$ such that $\phi: \mathbb{P}^n(\mathbb{C}) \rightarrow \mathbb{C}^n$ is a rational map and $D$ is given by $D=\{z_1=0\},$ where $z=(z_1, \cdots,z_n)$ are the natural coordinates on $\mathbb{C}^n$. Put 
\begin{align} \label{def_fVj}
f_{j}:= \phi(f),
\end{align}
which is a \emph{meromorphic function} on $\mathbb{C}$ for $1 \le j \le n$. Then $f$ is written in the local chart $V$ as $(f_{1}, \cdots, f_{n})$ on $f^{-1}(V)$.  Observe that  $f_{1}/ Q(f)$ is a nowhere vanishing holomorphic function on $f^{-1}(V)$. Recall that on $V$, the section $\mathscr{P}\big(j_k(f)\big)$ can be written as
\begin{align} \label{def_P}
\mathscr{P}\big(j_k(f)\big)
=
\underset{|\alpha_1|+2|\alpha_2|+\dots+k|\alpha_k|=m}
{\sum_{\alpha_1,\dots,\alpha_k\in\mathbb{N}^n}}
A_{\alpha_1,\dots,\alpha_k}
\prod_{\ell=1}^k
\bigg(
\big(
(\log f_{1})^{(\ell)}\big)^{\alpha_{\ell,1}}
\prod_{j=2}^{n}
\big(
f_{j}^{(\ell)} 
\big)
^{\alpha_{\ell,j}}
\bigg),
\end{align}
where $A_{\alpha_1,\dots,\alpha_k}$ are holomorphic functions on $f^{-1}(V)$ and $\alpha_j=(\alpha_{j,1}, \cdots, \alpha_{j,n})$ for $1 \le j \le n$. Hence, the support of $D_{\mathscr{P},f}$ is a subset of the zero set of $Q\circ f$ on $\mathbb{C}$. Furthermore, since for each $1 \leq \ell \leq k$, the pole order of $(\log f_{1})^{(\ell)}$ at any point $z\in\mathbb{C}$ is at most $\ell \min\{\ord_z f_{1}, 1\}$ (hence at most $\ell \min\{\ord_z Q(f), 1\}$) and since the degree of $\mathscr{P}$ is $m$, we get
\[
D_{\mathscr{P},f}
\leq 
m \sum_{z\in\mathbb{C}}\min\{\ord_z(Q\circ f), 1\}
z.
\]
Let $h$ be the pullback by $f$ of the Fubini-Study form $\omega_n$ on $\mathbb{P}^n(\mathbb{C})$. Using the Poincar\'e-Lelong formula, we have
\[
\dif\!\dif\!^c \log \| \mathscr{P}\big(j_k(f)\big)\|_h \ge \widetilde{m} f^* \omega - [D_{\mathscr{P},f}],
\]
where $[D_{\mathscr{P},f}]$ is the integration current of $D_{\mathscr{P},f}$. Combining this fact with the above inequality, we obtain
\[
\widetilde{m}\, T_{f}(r)
+
O(1)
\leq
\int_{\partial\Delta_r}\log
\|\mathscr{P}\big(j_k(f)\big)\|_h^2
\,
\dfrac{\dif\!\theta}{2\pi}
+
m\, N^{[1]}_f(r,D).
\]
Thus, it remains to verify 
\begin{equation}
\label{first-reduction}
\int_{\partial \Delta_r}\log
\| \mathscr{P}\big(j_k(f)\big) \|_h^2
\,
\dfrac{\dif\!\theta}{2\pi}
=
S_f(r).
\end{equation}
Using a partition of unity on $\mathbb{P}^n(\mathbb{C})$, the problem reduces to proving that
\begin{equation}
\label{second-reduction}
\int_{ \partial \Delta_r}\log |\chi(f)\mathscr{P}\big(j_k(f)\big)|^2 \,
\dfrac{\dif\!\theta}{2\pi}
=
S_f(r),
\end{equation}
where $\chi$ is a smooth positive function  compactly supported on a local chart $V$ as above. Using the following elementary observations with $s,s_1, \cdots, s_N \in \mathbb{R}^*_+$:
\begin{align*}
\log s
&=
\log^+ s-\log^+\frac{1}{s} \le \log^+ s \\
\log^+\sum_{i=1}^N s_i
&\leq
\sum_{i=1}^N\log^+ s_i
+
\log N\\
\log^+\prod_{i=1}^Ns_i
&\leq
\sum_{i=1}^N\log^+ s_i,
\end{align*}
where $\log^+$ denotes $\max\{\log, 0\}$, we get
\begin{align}
\label{third-reduction}
\int_{ \partial \Delta_r}
\log |\chi(f)\mathscr{P}
\big(j_k(f)\big)|^2
\,\dfrac{\dif\!\theta}{2\pi}
&\leq 
\underset{|\alpha_1|+2|\alpha_2|+\dots+k|\alpha_k|=m}
{\sum_{\alpha_1,\dots,\alpha_k\in\mathbb{N}^n}}
\sum_{\ell=1}^k
\bigg(
\int_{\partial \Delta_r}
\log^+
\big(
\chi(f)
 |(\log f_1)^{(\ell)}|^{\alpha_{\ell,1}}
\big)
\,\dfrac{\dif\!\theta}{2\pi}\notag\\
&+
\sum_{j=2}^n
\int_{\partial \Delta_r}
\log^+
\big(
\chi(f) |f_j^{(\ell)}|^{\alpha_{\ell,j}}
\big)
\,
\dfrac{\dif\!\theta}{2\pi}
\bigg)
+
O(1),
\end{align}
Recall from (\ref{def_fVj}) that $f_j$ are meromorphic functions on $\mathbb{C}$ for $1 \le j \le n$. Hence applying the logarithmic derivative Lemma to $f_1$, we infer that
\begin{align} \label{ine_loga1nhe}
\int_{\partial \Delta_r}
\log^+
\big(
\chi(f)
|(\log f_1)^{(\ell)}|^{\alpha_{\ell,1}}
\big)
\,\dfrac{\dif\!\theta}{2\pi}
=
S_f(r).
\end{align}
Therefore, it suffices to show that this property still holds for the remaining terms in the right-hand side of \eqref{third-reduction}. Continuing to apply the logarithmic derivative Lemma, we obtain
\[
\int_{\partial \Delta_r}
\log^+
\big(
\chi(f) |f^{(\ell)}_j|^{\alpha_{\ell,j}}
\big)
\,
\dfrac{\dif\!\theta}{2\pi}
\leq
 c \int_{ \partial\Delta_r}
\log^+
\big(
\chi(f) |f^{(1)}_j|^2
\big)
\,
\dfrac{\dif\!\theta}{2\pi}
+
S_f(r),
\]
for some constant $c$ which is independent of $r,f$. Hence it remains to check
\[
\int_{\partial \Delta_r}
\log^+
\big(
\chi(f) |f^{(1)}_j|^{2}
\big)
\,
\dfrac{\dif\!\theta}{2\pi}
=
S_f(r).
\]
This can be done by using the similar arguments as in \cite[p. 149]{Noguchi-Winkelmann2014}. For the reader's convenience, we present the idea here. Since $\chi$ is compactly supported on $V$, there exists a bounded positive function $B$ for which $\chi \dif\! z_j \wedge \dif\!\bar{z}_j \le B(z)\, \omega_n$ on $V$ for $2 \le j \le n$. This yields
\[
\chi(f)| f^{(1)}_j|^2 \dif\! z \wedge \dif\!\bar{z}
=
f^*(\chi\, \dif\! z_j \wedge \dif\!\bar{z}_j)
\leq
B(f) f^*\omega_n.
\]
The pullback $f^* \omega_n$ is of the form $B_1 \dif\! z \wedge \dif\! \bar{z}$. Hence we deduce from the above inequality that 
\begin{align*}
\int_{\partial \Delta_r} \log^+
\big(
\chi(f) |f^{(1)}_j|^{2}
\big)
\,
\dfrac{\dif\!\theta}{2\pi} &\leq \frac{1}{2\pi}\int_{\partial\Delta_r} \log^+ B(f) \,
\dif\!\theta
+
 \frac{1}{2\pi} \int_{\partial \Delta_r} \log^+ B_1 \,
\dif\!\theta  \\
&\le  \frac{1}{2\pi}\int_{ \partial \Delta_r} \log^+ B_1 \,
\dif\!\theta
+
\supp_{z \in V} |B(z)|.
\end{align*}
Estimating $\int_{\partial \Delta_r} \log^+ B_1$ is done by following the same arguments as in the proof of the logarithmic derivative Lemma, see \cite[(3.2.8)]{Noguchi-Winkelmann2014}. The proof is finished.
\end{proof}

\section{Existence of logarithmic jet differentials}

Let $f: \mathbb{C}\rightarrow\mathbb{P}^n(\mathbb{C})$ be an algebraically nondegenerate holomorphic curve. Following the second step in Siu's strategy to prove the Green-Griffith conjecture, morally, if we can produce enough logarithmic jet differentials valued in the dual of some ample line bundle on $\mathbb{P}^n(\mathbb{C})$, then among them, we can choose {\it at least one} such that $f$ {\it does not satisfy} the algebraic differential equation \eqref{alg.diff.equation}.

\begin{thm} \label{the_existence_jet}
\label{construction-jet-differential} 
Let $c$ be a positive integer with $c\geq 5n-1$. Let $D\subset\mathbb{P}^n(\mathbb{C})$ be a generic smooth hypersurface in $\mathbb{P}^n(\mathbb{C})$ having degree
\[
d
\geq
15(c+2)n^n.
\]
Let $f\colon\mathbb{C}\rightarrow\mathbb{P}^n
(\mathbb{C})$ be an entire holomorphic curve. If $f$ is algebraically nondegenerate, then for jet order $k=n$ and for weighted degrees $m>d$ big enough, there exists an integer $0 \le \ell \le m$ and a global logarithmic jet differential
\[
\mathscr{P}
\in
H^0
\big(
\mathbb{P}^n(\mathbb{C}),
E_{n,m}^{GG}T_{\mathbb{P}^n(\mathbb{C})}^*(\log D)
\otimes
\mathcal{O}_{\mathbb{P}^n(\mathbb{C})}(1)^{-cm+ \ell(5n-2)}
\big)
\]
such that
\begin{align}
\label{eq_canPfjets}
\mathscr{P}\big(j_n(f)
\big)
\not\equiv
0.
\end{align}
\end{thm}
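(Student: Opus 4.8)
The plan is to leverage Proposition \ref{existence-logarimith-jet-differential-lionel} of Darondeau to produce a nonzero vector space of global logarithmic jet differentials, and then to show that an algebraically nondegenerate curve $f$ cannot kill \emph{all} of them, possibly after twisting down by a controlled amount. More precisely, I would start with the vector space
\[
W_m
:=
H^0\big(
\mathbb{P}^n(\mathbb{C}),
E_{n,m}^{GG}T_{\mathbb{P}^n(\mathbb{C})}^*(\log D)
\otimes
\mathcal{O}_{\mathbb{P}^n(\mathbb{C})}(c)^{-m}
\big),
\]
which is nonzero for $k=n$, $d\geq 15(c+2)n^n$, and $m\gg d$. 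Each $\mathscr{P}\in W_m$ satisfies, by the Fundamental vanishing theorem, $\mathscr{P}(j_n(f))\equiv 0$ whenever $f(\mathbb{C})\subset\mathbb{P}^n(\mathbb{C})\setminus D$; but here $f$ is only assumed algebraically nondegenerate, not disjoint from $D$, so this obstruction does not directly apply. The real content is to show that the subspace of $\mathscr{P}\in W_m$ with $\mathscr{P}(j_n(f))\equiv 0$ is \emph{proper}, so that a good $\mathscr{P}$ exists --- and that the twist $\mathcal{O}(c)^{-m}$ can be improved to $\mathcal{O}(1)^{-cm+\ell(5n-2)}$ for some $0\le\ell\le m$.

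**The key steps**, in order, would be as follows. First, I would observe that the line bundle $\mathcal{O}_{\mathbb{P}^n(\mathbb{C})}(c)^{-m}\cong\mathcal{O}_{\mathbb{P}^n(\mathbb{C})}(1)^{-cm}$, so the displayed twist in the theorem with $\ell=0$ already recovers Darondeau's result; the genericity of $D$ and the hypothesis $c\geq 5n-1$ must enter when we \emph{trade} a positive twist $\mathcal{O}(1)^{+5n-2}$ against multiplication by a section of $\mathcal{O}(1)$ not vanishing on $f$ --- this is the mechanism allowing $\ell$ to range over $\{0,\dots,m\}$. Second, and this is the heart, I would run the now-classical ``moving the curve'' / Siu-type argument: consider the evaluation map sending $\mathscr{P}\in W_m$ to the meromorphic function $\mathscr{P}(j_n(f))$ on $\mathbb{C}$. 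If this map is \emph{identically zero} on all of $W_m$, one derives a contradiction with algebraic nondegeneracy of $f$: the universal family of logarithmic jet differentials cuts out a proper algebraic subvariety of a suitable jet space, and the $n$-jet of an algebraically nondegenerate curve cannot lie in it for all parameters. Concretely one uses that $f$ together with its derivatives spans enough directions; after a slicing / specialization argument (differentiating the relations $\mathscr{P}(j_n(f))\equiv 0$ and using that the coefficients $A_{\alpha_1,\dots,\alpha_k}(f)$ cannot all vanish identically since $f$ is not contained in any hypersurface), one forces the image of $f$ into the common zero locus of a nontrivial polynomial, contradicting algebraic nondegeneracy. Third, having found $\mathscr{P}\in W_m$ with $\mathscr{P}(j_n(f))\not\equiv 0$, I would factor out of $\mathscr{P}(j_n(f))$ its common zero divisor coming from a hyperplane section through $f$: writing things in homogeneous coordinates, $\mathscr{P}(j_n(f))$ is a section of $f^*\mathcal{O}(1)^{-cm}$ whose ``numerator'' may share a factor with a power of some linear form $L$ with $L(f)\not\equiv 0$; dividing by $L(f)^\ell$ and absorbing, one lands in $H^0\big(\mathbb{P}^n(\mathbb{C}),E_{n,m}^{GG}T^*_{\mathbb{P}^n(\mathbb{C})}(\log D)\otimes\mathcal{O}(1)^{-cm+\ell(5n-2)}\big)$ for the appropriate $\ell$, still with nonvanishing pullback. (The factor $5n-2$ is precisely the numerical slack produced by Darondeau's construction and the bound $c\geq 5n-1$, so bookkeeping of degrees must be done carefully here.)

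**The main obstacle** I anticipate is the second step: showing that $f$ algebraically nondegenerate cannot annihilate the \emph{entire} space $W_m$ of logarithmic jet differentials. Unlike the curves-avoiding-$D$ situation, where the Fundamental vanishing theorem does the job for free, here $f$ may genuinely meet $D$, so one must argue by a dimension count in jet space combined with the generic choice of $D$ --- essentially that for a generic hypersurface the base locus of the linear system $W_m$ (inside the projectivized $n$-jet bundle) has sufficiently small dimension that an algebraically nondegenerate curve escapes it. Making this rigorous requires either an explicit use of the structure of Darondeau's sections (they are built from Demailly--Semple-type invariants whose base locus is controlled) or a transversality argument over the parameter space of degree-$d$ hypersurfaces. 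The degree-bookkeeping in step three, while routine, is where the final constant $15(5n+1)n^n=15((5n-1)+2)n^n$ with $c=5n-1$ gets pinned down, and one must check that $m\gg d$ can be chosen uniformly so that all the twists involved remain effective.
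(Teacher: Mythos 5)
Your proposal has a genuine gap: it misses the actual mechanism by which the paper produces a jet differential not annihilated by $j_n(f)$, and as a result your step 2 aims at the wrong claim and your step 3 misattributes where the twist $\ell(5n-2)$ comes from.

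The paper does \emph{not} try to show that the subspace of $\mathscr{P}\in W_m$ with $\mathscr{P}(j_n(f))\equiv 0$ is proper in $W_m$; this is not obviously true (Darondeau's Proposition~\ref{existence-logarimith-jet-differential-lionel} only guarantees $\dim W_m>0$, so $W_m$ could be one-dimensional, and its unique generator could perfectly well vanish along $j_n(f)$ since $f$ may meet $D$). Instead the paper accepts that the initial $\mathscr{P}$ may kill $j_n(f)$ and \emph{manufactures new jet differentials by differentiating $\mathscr{P}$ along slanted vector fields}. Concretely: one works over the universal hypersurface $\mathcal{H}\subset\mathbb{P}^n(\mathbb{C})\times\mathbf{S}$, extends Darondeau's section holomorphically in the parameter direction via semi-continuity (Lemma~\ref{extend-holomorphically-section}), and then invokes Proposition~\ref{slanted-vector-field} (Darondeau's slanted vector fields): the twisted tangent bundle $T_{\mathcal{V}_{\mathcal{H},n}}\otimes\eta^*\bigl(\mathcal{O}_{\mathbb{P}^n(\mathbb{C})}(5n-2)\otimes\mathcal{O}_{\mathbf{S}}(1)\bigr)$ is globally generated over the regular vertical jets away from $\mathcal{H}$. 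Since $f$ is algebraically nondegenerate, one finds $z_0$ with $f'(z_0)\neq 0$ and $f(z_0)\notin D\cup\{\mathscr{P}=0\}$; some monomial coefficient $A_{\alpha_1,\dots,\alpha_n}$ of $\mathscr{P}$ is nonzero at $f(z_0)$, and because $\mathscr{P}$ is a weighted-degree-$m$ polynomial in jet coordinates, at most $m$ derivatives along well-chosen slanted vector fields extract that monomial and give $(v_1\cdots v_\ell)\cdot\mathscr{P}(j_n(f)(z_0))\neq 0$. Each derivative costs one copy of $\mathcal{O}_{\mathbb{P}^n(\mathbb{C})}(5n-2)$ (the $\mathcal{O}_{\mathbf{S}}(1)$ is disposed of by choosing $s$ off a fixed hyperplane $H_{\mathbf{S}}$), which is exactly where $-cm+\ell(5n-2)$ with $0\le\ell\le m$ comes from.

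By contrast, your third step --- dividing $\mathscr{P}(j_n(f))$ by $L(f)^{\ell}$ for a linear form $L$ --- would change the twist by $\ell$, not $\ell(5n-2)$, and is moreover unnecessary: once you have $\mathscr{P}(j_n(f))\not\equiv 0$ you may simply take $\ell=0$ in the theorem. The point of the twist $\ell(5n-2)$ is to cover the case where the initial $\mathscr{P}$ \emph{does} vanish identically along $j_n(f)$. Your second step does gesture at the right ingredients (``moving the curve,'' ``transversality over the parameter space,'' ``differentiating the relations''), but without the global generation of the low-pole-order frame on $\mathcal{V}^{\reg}_{\mathcal{H},n}$ and the degree bookkeeping $\ell\le m$ (coming from the weighted degree of $\mathscr{P}$), the argument does not close; and the remark that ``the coefficients $A_{\alpha_1,\dots,\alpha_k}(f)$ cannot all vanish identically'' does not by itself rule out $\mathscr{P}(j_n(f))\equiv 0$, since the monomials in the logarithmic jet coordinates can cancel even when the coefficients are nonzero.
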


The rest of this section is devoted to proving Theorem \ref{the_existence_jet} whose proof is based on \cite{DMR2010,Darondeau2016}. Let $\mathbf{S}:= \mathbb{P} H^0\big(\mathbb{P}^n(\mathbb{C}), \mathcal{O}(d)\big)$ be the projective parameter space of homogeneous polynomials of degree $d$ in $\mathbb{P}^n(\mathbb{C})$ which identifies with the projective space $\mathbb{P}^{\mathbf{N}_d}(\mathbb{C})$ of dimension
\[
\mathbf{N}_d
=
\dim\,
\mathbb{P}H^0
\big(
\mathbb{P}^n(\mathbb{C}),\mathcal{O}_{\mathbb{P}^n(\mathbb{C})}(d)
\big)
=
\left(\begin{matrix}
n+d\\
d
\end{matrix}\right)-1.
\]
We then introduce the {\sl universal hypersurface}
\[
\mathcal{H}
\subset
\mathbb{P}^n(\mathbb{C})
\times \mathbf{S}
\]
parametrizing all hypersurfaces of fixed degree $d$ in $\mathbb{P}^n(\mathbb{C})$, defined by the equation
\[
0
=
\sum_{\alpha\in\mathbb{N}^{n+1}}A_{\alpha}\, Z^{\alpha}
\]
in the following two collections of homogeneous coordinates
\begin{align*}
Z&=[Z_0:\dots:Z_n]\in\mathbb{P}^n(\mathbb{C}),\\ A&=[(A_{\alpha})_{\alpha\in\mathbb{N}^{n+1},|\alpha|=d}]\in\mathbb{P}^{\mathbf{N}_d}(\mathbb{C}),
\end{align*}
where $\alpha=(\alpha_0,\dots,\alpha_n)\in\mathbb{N}^{n+1}$ are multiindices. Since $\mathcal{H}$ is a smooth hypersurface on $\mathbb{P}^n(\mathbb{C}) \times \mathbf{S}$, we can construct the logarithmic $k$-jet bundle
\[
J_k(\mathbb{P}^n(\mathbb{C})
\times
\mathbf{S},-\log \mathcal{H})
\]
over $\mathbb{P}^n(\mathbb{C})\times \mathbf{S}$ along $\mathcal{H}$.

Now, let $\eta$ be  be the natural projection from $J_k(\mathbb{P}^n(\mathbb{C})\times
\mathbf{S},-\log \mathcal{H})$ to $\mathbb{P}^n(\mathbb{C})\times \mathbf{S}$. Let $\pr_1$ and $\pr_2$ be the natural projections from $\mathbb{P}^n(\mathbb{C})\times \mathbf{S}$ to the first and second part, respectively. Let $\mathcal{V}_{\mathcal{H},k}$ be the analytic subset of $J_k(\mathbb{P}^n(\mathbb{C})\times \mathbf{S},-\log \mathcal{H})$ consisting of all {\sl vertical logarithmic jet fields of order $k$} which, by definition, are jets $j_k(f)$ such that $f$ lies entirely in some fiber of the second projection $\pr_2$.

Denote by $\mathcal{V}^{\reg}_{\mathcal{H},k}$ the open subset consisting of all regular jets. By \cite[p. 571-572]{Darondeau_vectorfield} (see also \cite[p. 1088]{Merker2009}), $\mathcal{V}_{\mathcal{H},k}^{\reg}$ is smooth manifold. Following the method of producing new jet differentials developed by Siu \cite{Siu2004}, in the logarithmic setting, one needs to construct low pole order frames on $\mathcal{V}^{\reg}_{\mathcal{H},k}$.

\begin{pro}(\cite[Main Theorem]{Darondeau_vectorfield})
\label{slanted-vector-field}
For jet order $k\geq 1$, for degree $d\geq k$, the twisted tangent bundle
\[
T_{\mathcal{V}_{\mathcal{H},k}}
\otimes
\eta^*
\big(
\mathcal{O}_{\mathbb{P}^n(\mathbb{C})}(5\,k-2)\otimes\mathcal{O}_{\mathbf{S}}(1)
\big)
\]
is generated over $\mathcal{V}_{\mathcal{H},k}^{\reg}\setminus\eta^{-1}\mathcal{H}$ by its global holomorphic sections.
\end{pro}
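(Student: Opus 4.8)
The plan is to prove the statement by an explicit construction of low--pole--order vector fields on the space of vertical logarithmic jets, following the method of Siu and of Merker in the compact case, adapted by Darondeau to the logarithmic setting. First I would localize: cover $\mathbb{P}^n(\mathbb{C})\times\mathbf{S}$ by products of affine charts $\{Z_0\neq 0\}\times\{A_{\alpha_0}\neq 0\}$, with affine coordinates $z=(z_1,\dots,z_n)$ and $a=(a_\alpha)_{|\alpha|=d}$, so that $\mathcal{H}=\{P=0\}$ for the inhomogeneous polynomial $P(z,a)=\sum_{|\alpha|=d}a_\alpha z^\alpha$. On such a chart, away from $\eta^{-1}\mathcal{H}$ the logarithmic jet bundle agrees with the ordinary one, so $\mathcal{V}_{\mathcal{H},k}$ has jet coordinates $z^{(1)},\dots,z^{(k)}\in\mathbb{C}^n$ freely, whereas near $\mathcal{H}$ the trivialization of $J_k(\mathbb{P}^n(\mathbb{C})\times\mathbf{S},-\log\mathcal{H})$ replaces one ordinary jet coordinate by the functions $(\log P)^{(1)},\dots,(\log P)^{(k)}$, whose behaviour along $\mathcal{H}$ is governed by Fa\`a di Bruno's formula. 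The target is to exhibit global holomorphic sections of $T_{\mathcal{V}_{\mathcal{H},k}}\otimes\eta^*(\mathcal{O}_{\mathbb{P}^n}(5k-2)\otimes\mathcal{O}_{\mathbf{S}}(1))$ whose values span the fibre at every point of $\mathcal{V}^{\reg}_{\mathcal{H},k}\setminus\eta^{-1}\mathcal{H}$; since a section must be holomorphic over all of $\mathcal{V}_{\mathcal{H},k}$, including over $\mathcal{H}$, the whole difficulty is to produce fields that, after this moderate twist, extend holomorphically across $\{Z_0=0\}$, across $\{A_{\alpha_0}=0\}$, \emph{and} across $\mathcal{H}$.

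I would then produce three families of fields. First, the \emph{coefficient--direction} fields: in an affine chart these are essentially the $\partial/\partial a_\alpha$ and the Euler field $\sum_\beta a_\beta\,\partial/\partial a_\beta$, which after the twist by $\mathcal{O}_{\mathbf{S}}(1)$ already become global away from $\mathcal{H}$; to make them extend holomorphically across $\mathcal{H}$ as well one appends a correction in the $z^{(\ell)}$--directions cancelling the $1/P$--terms produced when $\partial/\partial a_\alpha$ hits $\log P$ --- this is Siu's device of trading a jet differentiation against an infinitesimal perturbation of the coefficients. Second, the \emph{base-- and jet--direction} fields: starting from the total--derivative (prolongation) operator one manufactures, by iterated prolongation together with the same parameter--trading trick, fields whose values reach all of $\partial/\partial z_i$ and $\partial/\partial z_i^{(\ell)}$, $1\le\ell\le k$, holomorphic along $\mathcal{H}$, at the cost of pole order at most $5k-2$ along $\{Z_0=0\}$. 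Darondeau's improvement of the naive, quadratic--in--$k$ pole order down to this linear bound $5k-2$, via a more economical prolongation scheme, is the technical core. Third, a few further fields so that the families are compatible under the changes of affine chart on $\mathbb{P}^n(\mathbb{C})$ and on $\mathbf{S}$ and patch to genuine global objects.

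With the fields in hand, two verifications remain. \emph{Holomorphy}: each constructed field, multiplied by the stated twist, extends across $\{Z_0=0\}$, across $\{A_{\alpha_0}=0\}$ and across $\mathcal{H}$ to a global holomorphic section of $T_{\mathcal{V}_{\mathcal{H},k}}\otimes\eta^*(\mathcal{O}_{\mathbb{P}^n}(5k-2)\otimes\mathcal{O}_{\mathbf{S}}(1))$; this is pole--order accounting on chart overlaps, the delicate point being that $(\log P)^{(\ell)}$ contributes only finitely many --- at most $k$ --- factors of $1/P$, all of which are killed by the parameter corrections. \emph{Spanning}: at a point $p$ with $z^{(1)}(p)\neq 0$ (regularity) lying off $\eta^{-1}\mathcal{H}$, the values of the three families span $T_{\mathcal{V}_{\mathcal{H},k},p}$. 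Both hypotheses are used decisively: off $\mathcal{H}$ there is no relation among the $a_\alpha$, so the first family alone sweeps out the whole $\mathbf{S}$--factor of the tangent space, while regularity together with the arithmetic hypothesis $d\ge k$ makes the iterated prolongations non--degenerate, so the second family yields the remaining base and jet directions; a Zariski--density argument reduces the needed non--vanishing of the relevant determinant to a single point.

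The step I expect to be the main obstacle is the simultaneous control of both twists: keeping every constructed field tangent to the vertical--jet locus $\mathcal{V}_{\mathcal{H},k}$, holomorphic across the logarithmic boundary $\mathcal{H}$ --- where the $(\log P)^{(\ell)}$ threaten poles of order growing with $k$ --- and at the same time of pole order no worse than $5k-2$ along the hyperplanes at infinity of $\mathbb{P}^n(\mathbb{C})$ and $1$ along those of $\mathbf{S}$. It is precisely this triple requirement that dictates the careful choice of the coefficient polynomials entering the corrections and the economical iterated--prolongation scheme; once those are fixed, what remains is routine Fa\`a di Bruno bookkeeping and a transversality count at one regular jet.
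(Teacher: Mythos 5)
The paper does not prove this Proposition at all: it is quoted verbatim as \cite[Main Theorem]{Darondeau\_vectorfield} and used as an external black box, with only a one--sentence gloss afterward (that the sections are genuine vector fields tangent to $\mathcal{V}_{\mathcal{H},k}$ with polynomial coefficients in log--jet coordinates). So there is no ``paper's own proof'' to compare against, and any honest treatment should begin by saying that the result is delegated to Darondeau.

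That said, taken on its own terms, your sketch is a faithful summary of what Darondeau actually does: localize in products of affine charts of $\mathbb{P}^n(\mathbb{C})\times\mathbf{S}$; use the log--jet coordinates, with $(\log P)^{(1)},\dots,(\log P)^{(k)}$ replacing one column of ordinary jet coordinates near $\mathcal{H}$; build three families of fields (coefficient, base$+$jet via iterated prolongations, and patching fields); kill the $1/P$ singularities by Siu's parameter--trading corrections; and conclude generation off $\mathcal{H}$ at a regular jet using $d\ge k$. The identification of Darondeau's economical prolongation scheme as the source of the linear bound $5k-2$ in place of the na\"ive quadratic one is also correct. The genuine gap is that this is a strategy map, not a proof: you never write down a single explicit field, never state the tangency conditions defining $\mathcal{V}_{\mathcal{H},k}$ that the corrections must satisfy, and never carry out even one pole--order count on a chart overlap. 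The ``technical core'' you yourself flag --- the simultaneous control of tangency, holomorphy across $\mathcal{H}$, and the twists $5k-2$ and $1$ --- is exactly the part the outline leaves unresolved. As a restatement of Darondeau's architecture it is accurate; as a replacement proof it is not, and the paper itself never attempts one.
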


In fact, those global sections mentioned in the above Proposition are global vector fields on the whole logarithmic $k$-jet bundle and satisfy the canonical tangential conditions described as in \cite{Darondeau_vectorfield, Merker2009}. Hence they are true vector fields on the smooth part of $\mathcal{V}_{\mathcal{H},k}$. Moreover by the constructions in \cite{Darondeau_vectorfield, Merker2009,Siu2015}, the coefficients of those vector fields are polynomials in local logarithmic jet coordinates.   

Let $\mathbf{Z}_0$ be the subset of $\mathbf{S}$ consisting of all $s$ whose corresponding hypersurface $D_s$ is not smooth. Observe that $\mathbf{Z}_0$ is a proper analytic subset of $\mathbf{S}$.

From now on we work with the fixed jet order $k=n$. Since $\pr_2^{-1}s= \mathbb{P}^n(\mathbb{C})\times \{s\}$ and since $D_s$ is smooth for every $s$ outside $\mathbf{Z}_0$, one can define $J_n(\pr_2^{-1}s, -\log D_s)$ for any $s \in \mathbf{S} \backslash \mathbf{Z}_0$. Let us set
\begin{align*}
\mathcal{L}&:= \bigcup_{s\in \mathbf{S} \backslash \mathbf{Z}_0 } J_n( \pr_2^{-1}s, -\log D_s),\\
\mathcal{L}^{\reg}&:= \bigcup_{s\in \mathbf{S} \backslash \mathbf{Z}_0 }  J^{\reg}_n(\pr_2^{-1}s, -\log D_s).\\
\end{align*}
Observe that $\mathcal{L}$ has a natural structure of holomorphic fiber bundle over $\mathbb{P}^n(\mathbb{C})\times (\mathbf{S} \backslash \mathbf{Z}_0)$. Note also that $\mathcal{L},\mathcal{L}^{\reg}$ are open subsets of $\mathcal{V}_{\mathcal{H},n}$ and $\mathcal{V}_{\mathcal{H},n}^{\reg}$, respectively. Set
\[\mathcal{E}:= \bigcup_{s\in \mathbf{S} \backslash \mathbf{Z}_0 } E^{GG}_{n,m}T^*\pr_2^{-1}s(\log D_s),
\]
then $\mathcal{E}$ carries the structure of holomorphic vector bundle over $\mathbb{P}^n(\mathbb{C})\times (\mathbf{S} \backslash \mathbf{Z}_0)$. This fact allows us to
extend holomorphically a nonzero jet differential provided by Proposition \ref{existence-logarimith-jet-differential-lionel}. Let us enter the details.

\begin{lem}
\label{extend-holomorphically-section}
Let $c\geq 5n-1$ be a positive integer. For degree $d\geq 15(c+2) n^n$, for weighted degree $m \gg d$, there exists a proper analytic subset $\mathbf{Z}$ of $\mathbf{S}$ containing $\mathbf{Z}_0$ such that for every $s \in \mathbf{S} \setminus \mathbf{Z}$, we can find a Zariski open neighborhood $U_s$ of $s$ in $\mathbf{S} \backslash \mathbf{Z}_0$ and a nonzero holomorphic section $\mathscr{P} \not \equiv 0$ of the twisted vector bundle $\mathcal{E} \otimes \pr_1^* \mathcal{O}_{\mathbb{P}^n(\mathbb{C})}(1)^{-c m}$ over $\pr_2^{-1}U_s$. 
\end{lem}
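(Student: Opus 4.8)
The plan is to combine the existence statement of Proposition \ref{existence-logarimith-jet-differential-lionel} with an upper-semicontinuity (jumping-locus) argument for cohomology in the family parametrized by $\mathbf{S}$. First I would take a single point $s_0 \in \mathbf{S}\setminus \mathbf{Z}_0$, so that $D_{s_0}$ is a smooth hypersurface of degree $d \geq 15(c+2)n^n$. Proposition \ref{existence-logarimith-jet-differential-lionel} then provides, for $m \gg d$ big enough and jet order $k=n$, a nonzero section
\[
0 \neq \mathscr{P}_0 \in H^0\big(\mathbb{P}^n(\mathbb{C}), E^{GG}_{n,m}T^*_{\mathbb{P}^n(\mathbb{C})}(\log D_{s_0}) \otimes \mathcal{O}_{\mathbb{P}^n(\mathbb{C})}(c)^{-m}\big).
\]
Since $c \geq 5n-1 \geq 1$, the line bundle $\mathcal{O}_{\mathbb{P}^n(\mathbb{C})}(c)^{-m} = \mathcal{O}_{\mathbb{P}^n(\mathbb{C})}(1)^{-cm}$, so this is exactly a nonzero element of the fiber at $s_0$ of the direct image sheaf $\mathcal{G} := (\pr_2)_*\big(\mathcal{E}\otimes \pr_1^*\mathcal{O}_{\mathbb{P}^n(\mathbb{C})}(1)^{-cm}\big)$ on $\mathbf{S}\setminus\mathbf{Z}_0$ (after identifying $\mathcal{E}|_{\pr_2^{-1}s}$ with $E^{GG}_{n,m}T^*_{\mathbb{P}^n(\mathbb{C})}(\log D_s)$ via the bundle structure of $\mathcal{E}$ established above).

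Next I would invoke the semicontinuity theorem (Grauert): the function $s \mapsto \dim H^0\big(\pr_2^{-1}s, \mathcal{E}|_{\pr_2^{-1}s} \otimes \pr_1^*\mathcal{O}(1)^{-cm}\big)$ is upper semicontinuous in the Zariski topology on $\mathbf{S}\setminus\mathbf{Z}_0$, and the locus where it is positive is a closed analytic subset; its complement is Zariski open. Because this dimension is $\geq 1$ at $s_0$, the positivity locus $\mathbf{W}\subset \mathbf{S}\setminus\mathbf{Z}_0$ contains $s_0$ and hence is nonempty; since $\mathbf{S}$ is irreducible, $\mathbf{W}$ is actually a nonempty Zariski open subset of $\mathbf{S}\setminus\mathbf{Z}_0$, so its complement $\mathbf{Z}'$ in $\mathbf{S}\setminus\mathbf{Z}_0$ is a proper analytic subset. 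Setting $\mathbf{Z} := \mathbf{Z}_0 \cup \mathbf{Z}'$ gives a proper analytic subset of $\mathbf{S}$ containing $\mathbf{Z}_0$. Then for any $s \in \mathbf{S}\setminus\mathbf{Z} = \mathbf{W}$, the stalk of $\mathcal{G}$ at $s$ surjects onto a nonzero subspace of the fiber $H^0(\pr_2^{-1}s, \cdots)$ (by Grauert's base-change / coherence, after possibly shrinking), so a nonzero section defined on some Zariski open neighborhood $U_s\subset \mathbf{S}\setminus\mathbf{Z}_0$ of $s$ exists; concretely, one takes a local generator of $\mathcal{G}$ near $s$ that does not vanish at $s$, which is a nonzero holomorphic section $\mathscr{P}\not\equiv 0$ of $\mathcal{E}\otimes \pr_1^*\mathcal{O}_{\mathbb{P}^n(\mathbb{C})}(1)^{-cm}$ over $\pr_2^{-1}U_s$.

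The main obstacle is the bookkeeping needed to make the identifications precise: one must check that the bundle $\mathcal{E}$ really restricts on each slice $\pr_2^{-1}s$ to the logarithmic Green--Griffiths bundle of $D_s$ (this is the content of the remark that $\mathcal{E}$ is a holomorphic vector bundle over $\mathbb{P}^n(\mathbb{C})\times(\mathbf{S}\setminus\mathbf{Z}_0)$, which follows from Fa\`a di Bruno's formula applied fiberwise as in the construction of $E^{GG}_{k,m}T^*_X(\log D)$), and that the direct image $\mathcal{G}$ is coherent so that semicontinuity and the existence of local generators apply. Once these structural facts are in place, the argument is a standard application of semicontinuity plus irreducibility of $\mathbf{S}$; no further estimates beyond those already furnished by Proposition \ref{existence-logarimith-jet-differential-lionel} are required.
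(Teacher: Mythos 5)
Your overall strategy---combine Proposition~\ref{existence-logarimith-jet-differential-lionel} with semicontinuity / base change to propagate the section to a Zariski-open neighborhood in the parameter space---is the same as the paper's, but the middle of your argument contains a genuine logical gap and misidentifies what the exceptional set $\mathbf{Z}$ should be.

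First, your argument that the positivity locus $\mathbf{W} = \{s : h^0(\pr_2^{-1}s,\,\cdot\,) \geq 1\}$ is open is a non sequitur. You correctly observe that, by upper semicontinuity, $\mathbf{W}$ is a \emph{closed} analytic subset of $\mathbf{S}\setminus\mathbf{Z}_0$; irreducibility of $\mathbf{S}$ plus $s_0 \in \mathbf{W}$ does not make a closed set open. The correct observation is that Proposition~\ref{existence-logarimith-jet-differential-lionel} applies to \emph{every} smooth hypersurface of degree $d \geq 15(c+2)n^n$, hence $h^0(\pr_2^{-1}s,\,\cdot\,) \geq 1$ for \emph{every} $s \in \mathbf{S}\setminus\mathbf{Z}_0$; thus $\mathbf{W} = \mathbf{S}\setminus\mathbf{Z}_0$ and your $\mathbf{Z}'$ is empty. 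There is no need to single out a point $s_0$ at all.

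Second, because $\mathbf{Z}'$ is empty, the actual proper analytic set that must be excised is not the vanishing locus of $h^0$ but its \emph{jumping locus}, i.e.\ the closed subset where $h^0(\pr_2^{-1}s,\,\cdot\,)$ exceeds its generic value. This is where the semicontinuity theorem is really used in the paper: outside the jumping locus, Grauert's theorem / cohomology and base change make the direct image locally free with the natural base-change map an isomorphism, so the restriction
\[
H^0\big(\pr_2^{-1}U_s,\ \mathcal{E}\otimes\pr_1^*\mathcal{O}_{\mathbb{P}^n(\mathbb{C})}(1)^{-cm}\big)
\longrightarrow
H^0\big(\pr_2^{-1}s,\ \cdot\,\big)
\]
is surjective, and the fiberwise section $\mathscr{P}_s$ extends. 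At a jump point this surjectivity can fail (the stalk of the direct image sheaf need not surject onto the full $H^0$ of the fiber, and may even be zero), so your final claim ``the stalk of $\mathcal{G}$ at $s$ surjects onto a nonzero subspace of the fiber\dots\ after possibly shrinking'' is not justified for arbitrary $s \in \mathbf{S}\setminus\mathbf{Z}_0$. You need to build the jumping locus into the definition of $\mathbf{Z}$, exactly as the paper does; then the rest of your argument goes through.
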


\begin{proof}
By construction, for any $s\in \mathbf{S} \backslash \mathbf{Z}_0,$ the restriction of $\mathcal{E}$ to $\pr_2^{-1}s=\mathbb{P}^n(\mathbb{C}) \times \{s\}$ coincides with 
\[
E^{GG}_{n,m}T^*\pr_2^{-1}s(\log D_s).
\]
Hence Proposition \ref{existence-logarimith-jet-differential-lionel} guarantees the existence of a nonzero global section 
\[
0\not \equiv\mathscr{P}_s
\in
H^0
\big(
\mathbb{P}^n(\mathbb{C}) \times \{s\},\mathcal{E} \otimes \pr_1^* \mathcal{O}_{\mathbb{P}^n(\mathbb{C})}(1)^{-c m}|_{\pr^{-1}_2 s}
\big)
\]
of the restriction of the twisted vector bundle $\mathcal{E} \otimes \pr_1^* \mathcal{O}_{\mathbb{P}^n(\mathbb{C})}(1)^{-c m}$ to $\pr_2^{-1}s$. By the semi-continuity theorem (c.f. \cite[p. 50]{Mumford_alelian_varieties}), there exists a proper Zariski closed subset $\mathbf{Z}$ of $\mathbf{S}$  containing $\mathbf{Z}_0$ such that for any $s \in \mathbf{S} \setminus \mathbf{Z}$, the natural restriction map 
\[
H^0
\big(\pr_2^{-1} U_{s}, \mathcal{E} \otimes \pr_1^* \mathcal{O}_{\mathbb{P}^n(\mathbb{C})}(1)^{-c m}
\big)
\longrightarrow
H^0
\big(
\pr_2^{-1} s,\mathcal{E}  \otimes \pr_1^* \mathcal{O}_{\mathbb{P}^n(\mathbb{C})}(1)^{-c m}|_{\pr^{-1}_2 s}
\big)
\]
is onto for some Zariski open subset $U_s\subset\mathbf{S} \backslash \mathbf{Z}_0$ containing $s.$ As a consequence, the above section $\mathscr{P}_s$ can be extended holomorphically to a section $\mathscr{P}$ of $\mathcal{E} \otimes \pr_1^* \mathcal{O}_{\mathbb{P}^n(\mathbb{C})}(1)^{-c m}$ over $\pr_2^{-1} U_s$.
\end{proof}

\begin{proof}[Proof of Theorem~\ref{construction-jet-differential}.] 

Let $\mathbf{Z}, \mathscr{P}$ be as in Lemma \ref{extend-holomorphically-section}. Let us first describe precisely the generic assumption of $D$ in the statement. By this, we mean that if $D$ corresponds to the element $s\in\mathbf{S}$ (i.e. $D=D_s$), then $s$ lies outside $\mathbf{Z}\cup H_{\mathbf{S}}$, where $H_{\mathbf{S}}$ is a fixed arbitrary hyperplane of $\mathbf{S}$. Here the condition $s\not\in H_{\mathbf{S}}$ is given in order to get rid of $\eta^*\mathcal{O}_{\mathbf{S}}(1)$ in Proposition \ref{slanted-vector-field} because the line bundle $\mathcal{O}_{\mathbf{S}}(1)$ is trivial on $\mathbf{S} \backslash H_{\mathbf{S}}$. From now on, we fix $s\in\mathbf{S}\setminus \mathbf{Z}$ and $D=D_s$.

Applying Proposition \ref{slanted-vector-field} for jet order $k=n$, the twisted tangent bundle 
\[
T_{\mathcal{V}_{\mathcal{H},n}} \otimes \eta^*\big(\mathcal{O}_{\mathbb{P}^n(\mathbb{C})}(5n-2) \otimes \mathcal{O}_\mathbf{S}(1)\big)
\]
is generated by its global holomorphic sections over $\mathcal{V}_{\mathcal{H},n}^{\reg} \setminus \eta^{-1}\mathcal{H}$. Moreover, the coefficients of those sections are polynomials in the logarithmic $n$-jet coordinates associated with the canonical coordinates of $\mathbb{P}^n(\mathbb{C}) \times \mathbf{S}$. In particular, the restriction of the bundle 
\[
T_{\mathcal{V}_{\mathcal{H},n}} \otimes \eta^*\mathcal{O}_{\mathbb{P}^n(\mathbb{C})}(5n-2)
\]
to $\eta^{-1} Y$, where
\[
Y:=
\pr_2^{-1} (U_{s} \backslash H_{\mathbf{S}}) \setminus \mathcal{H}
\]
is generated on $(\mathcal{V}_{\mathcal{H},n}^{\reg} \cap \eta^{-1} Y)$  by its global sections whose coefficients are polynomials in the logarithmic jet coordinates as above.

For $0\leq \ell\leq m$, let $v_1, \dots, v_{\ell}$ be sections of $T_{\mathcal{V}_{\mathcal{H},n}} \otimes \eta^*\mathcal{O}_{\mathbb{P}^n(\mathbb{C})}(5n-2)$ over the open subset $\mathcal{L} \subset \mathcal{V}_{\mathcal{H},n}$. As explained below, the significance of those sections is that they allow to construct new global logarithmic jet differentials. Indeed,  we can view $\mathscr{P}$ as a holomorphic mapping
\[
\mathscr{P}
\colon
\mathcal{L}
|_{\pr_2^{-1} U_s}
\rightarrow
\pr_1^*
\mathcal{O}_{\mathbb{P}^n(\mathbb{C})}(1)^{-cm}
|_{\pr_2^{-1} U_s},
\]
which is locally a homogeneous polynomial of degree $m$. It follows that the Lie derivative $(v_1 \cdots v_{\ell})\cdot\mathscr{P}$ is also a holomorphic map from $\mathcal{L}
|_{\pr_2^{-1} U_s}$ to 
\[
\pr_1^*\mathcal{O}_{\mathbb{P}^n(\mathbb{C})}(1)^{-cm+\ell(5n -2)}
\]
and is locally a homogeneous polynomial of  the same degree $m$. The fact that the derivative of $\mathscr{P}$ along $v_j$ preserves the degree $m$ can be deduced from the fact that the coefficients of $v_j$ are polynomials in the logarithmic jet coordinates and by the chain rule of derivatives, the degree of those polynomials should compensate the losses of degree due to the differentiation with respect to $v_j,$ see \cite[Sec. 3.7]{Siu2015}.  In summary, we obtain a holomorphic map
\begin{align}
\label{state_daohamLie}
(v_1 \cdots v_{\ell})\cdot\mathscr{P}: \mathcal{L}
|_{\pr_2^{-1} U_s} \rightarrow \pr_1^*\mathcal{O}_{\mathbb{P}^n(\mathbb{C})}(1)|_{\pr_2^{-1} U_s}^{-cm+\ell(5n -2)}.
\end{align}
By composing $f$ with the inclusion 
\[
\mathbb{P}^n(\mathbb{C})
\hookrightarrow
\mathbb{P}^n(\mathbb{C})
\times
\{s\}
\subset
\mathbb{P}^n(\mathbb{C})
\times \mathbf{S},\]
we can consider $f$ as a holomorphic curve into $Y \subset \mathbb{P}^n(\mathbb{C}) \times \mathbf{S}$ because $s \in U_{s} \setminus H_{\mathbf{S}}$ and $f$ is not included in $D.$ Let $\{\mathscr{P}=0\} \subset \pr_2^{-1} U_s$ be the zero divisor of $\mathscr{P},$ where we view $\mathscr{P}$ as a holomorphic section of $\mathcal{E} \otimes \pr_1^* \mathcal{O}_{\mathbb{P}^n(\mathbb{C})}(1)^{-c m}$ over $\pr_2^{-1} U_s.$ Since $f$ is algebraically nondegenerate, there exists $z_0 \in \mathbb{C}$ such that $f'(z_0) \not =0$ and $f(z_0) \not \in D \cap \{\mathscr{P}=0\}$. Consequently, we get
\begin{align} \label{inclusioj_nf}
j_n(f)(z_0) \in (\mathcal{L}^{\reg} \cap \eta^{-1}Y).
\end{align}
Now proceeding as in \cite{DMR2010}, we can show that there exist global slanted vector fields $v_1,\dots, v_{\ell}$ for some $0 \le \ell \le m$ such that 
\[
(v_1 \cdots v_{\ell})\cdot\mathscr{P}\big(j_n(f)\big)
\not=
0.
\]
For reader's convenience, we briefly recall the idea. Denoted by $\mathscr{P}_s$ the restriction of $\mathscr{P}$ to $\mathbb{P}^n(\mathbb{C})\times \{s\}$.  Consider the logarithmic jet coordinates $(z, z^{(1)},\dots, z^{(n)}) \in \mathbb{C}^{n(n+1)}$ around $j_n(f)(z_0)$ of $\mathcal{L}|_{\mathbb{P}^n(\mathbb{C}) \times\{s\}}$. Using a linear change of coordinates, we obtain  modified logarithmic jet coordinates $(z', z'^{(1)},\dots, z'^{(n)})$ in which $j_n(f)(z_0)$ is the origin. Since $\mathscr{P}_s$ is locally a homogeneous polynomial in logarithmic jet coordinates whose coefficients are holomorphic functions on local charts of $\mathbb{P}^n(\mathbb{C})$, so it is in the new logarithmic jet coordinates.

By the choice of $z_0,$ there exists a coefficient $A_{\alpha_1, \dots,\alpha_n}(z)$ of $\mathscr{P}_s$ (see (\ref{def_P})) for which $A_{\alpha_1, \dots, \alpha_n}(f(z_0)) \not= 0$. Let
\[
A_{\alpha_1, \dots, \alpha_n}(z) \big(z'^{(1)}\big)^{\alpha_1} \cdots \big(z'^{(n)}\big)^{\alpha_n}
\]
 be the monomial of $\mathscr{P}_s$ associated with $A_{\alpha_1, \dots,\alpha_n}$, where $\alpha_j \in \mathbb{N}^n$ for $1 \le j \le n$ and $|\alpha_1| + 2 |\alpha_2| +\cdots+n |\alpha_n|\le m$. We then choose local vector fields $v'_1, \dots v'_{\ell}$ around the origin $j_n(f)(z_0)$ for which 
\[
(v'_1 \cdots v'_{\ell})\cdot \mathscr{P}\big(j_n(f)(z_0)\big)
=
A_{\alpha_1, \dots, \alpha_n}\big(f(z_0)\big) \not =0.
\] 
As we mentioned above, these vector fields $v'_1, \dots, v'_{\ell}$ can be generated by global vector fields $v_1, \dots, v_{\ell}$ on $(\mathcal{L}^{\reg} \cap \eta^{-1}Y)$. Combining this with (\ref{inclusioj_nf}), we get $(v_1 \cdots v_{\ell})\cdot \mathscr{P}\big(j_n(f)(z_0) \big)\not = 0$. This together with (\ref{state_daohamLie}) implies (\ref{eq_canPfjets}). The proof of Theorem \ref{the_existence_jet} is completed.
\end{proof}

\begin{cor}
Let $c$ be a positive integer with $c\geq 5n-1$. Let $D\subset\mathbb{P}^n(\mathbb{C})$ be a generic hypersurface in $\mathbb{P}^n(\mathbb{C})$ having degree
\[
d
\geq
15(c+2)n^n.
\]
Let $f\colon\mathbb{C}\rightarrow\mathbb{P}^n
(\mathbb{C})$ be an entire holomorphic curve. If $f$ is algebraically nondegenerate, then the following Second Main Theorem type estimate holds:
\[
T_f(r)
\leq
\dfrac{1}{c- 5n+2} \, N^{[1]}_f(r,D)+ S_f(r).
\]
In particular, choosing $c=5n-1$, one obtains the Main Theorem.
\end{cor}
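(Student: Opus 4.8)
The plan is to combine the two results already established. Theorem~\ref{the_existence_jet} manufactures, for a generic $D$, a logarithmic jet differential that the curve $f$ does not annihilate, and Theorem~\ref{smt-form-logarithmic-diff-jet} converts such a differential into a Second Main Theorem estimate; the corollary is then just a matter of bookkeeping the twisting power. First I would observe that a \emph{generic} hypersurface $D$ is in particular smooth: in the proof of Theorem~\ref{the_existence_jet} the genericity of $D=D_s$ means precisely that $s$ avoids the proper analytic subset $\mathbf{Z}\cup H_{\mathbf{S}}$, and since $\mathbf{Z}\supset\mathbf{Z}_0$ this already excludes all singular members, so the hypotheses of both theorems are satisfied.

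Next I would apply Theorem~\ref{the_existence_jet} with jet order $k=n$ and a fixed weighted degree $m>d$ large enough. It supplies an integer $0\le\ell\le m$ and a nonzero global logarithmic jet differential
\[
\mathscr{P}\in H^0\big(\mathbb{P}^n(\mathbb{C}),E_{n,m}^{GG}T_{\mathbb{P}^n(\mathbb{C})}^*(\log D)\otimes\mathcal{O}_{\mathbb{P}^n(\mathbb{C})}(1)^{-cm+\ell(5n-2)}\big)
\]
with $\mathscr{P}\big(j_n(f)\big)\not\equiv0$. Put $\widetilde{m}:=cm-\ell(5n-2)$. This is an integer, and since $\ell\le m$ and $c\ge 5n-1$ (so that $c-5n+2\ge1$) we get $\widetilde{m}\ge m(c-5n+2)\ge m>0$. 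Hence $\mathscr{P}$ is a section of $E_{n,m}^{GG}T_{\mathbb{P}^n(\mathbb{C})}^*(\log D)\otimes\mathcal{O}_{\mathbb{P}^n(\mathbb{C})}(1)^{-\widetilde{m}}$ with $\widetilde{m}$ a \emph{positive} integer and with $\mathscr{P}\big(j_n(f)\big)\not\equiv0$, so Theorem~\ref{smt-form-logarithmic-diff-jet} applies (with $k=n$) and yields
\[
T_f(r)\le\frac{m}{\widetilde{m}}\,N^{[1]}_f(r,D)+S_f(r).
\]

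To conclude I would use $N^{[1]}_f(r,D)\ge0$ together with $\ell\le m$ and $5n-2>0$ to estimate $\dfrac{m}{\widetilde{m}}=\dfrac{m}{cm-\ell(5n-2)}\le\dfrac{m}{m(c-5n+2)}=\dfrac{1}{c-5n+2}$, whence $\dfrac{m}{\widetilde{m}}\,N^{[1]}_f(r,D)\le\dfrac{1}{c-5n+2}\,N^{[1]}_f(r,D)$ and the stated inequality follows. Specializing $c=5n-1$ gives $c-5n+2=1$ and turns the degree hypothesis into $d\ge 15(c+2)n^n=15(5n+1)n^n$, so the estimate becomes $T_f(r)\le N^{[1]}_f(r,D)+S_f(r)$, which is exactly the Main Theorem. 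I do not expect any genuine obstacle at this stage: the analytic core (the logarithmic derivative lemma bookkeeping) is absorbed into Theorem~\ref{smt-form-logarithmic-diff-jet} and the algebro-geometric core (slanted vector fields, semicontinuity, Darondeau's existence result) into Theorem~\ref{the_existence_jet}; the only delicate point is the positivity of $\widetilde{m}$ and the bound $m/\widetilde{m}\le 1/(c-5n+2)$, and both are immediate consequences of the single inequality $\ell\le m$.
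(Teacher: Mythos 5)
Your proposal is correct and follows essentially the same route as the paper: apply Theorem~\ref{the_existence_jet} to produce the jet differential, set $\widetilde{m}=cm-\ell(5n-2)$, observe $\widetilde{m}\geq m(c-5n+2)\geq m$, and feed this into Theorem~\ref{smt-form-logarithmic-diff-jet}. The paper's proof is a more terse version of the same bookkeeping; your added remarks about genericity implying smoothness and about the positivity of $N^{[1]}_f(r,D)$ are correct and only make explicit what the paper leaves implicit.
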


\begin{proof} This is a direct application of  Theorem~\ref{smt-form-logarithmic-diff-jet} to global logarithmic jet differential $\mathscr{P}$ supplied by Theorem \ref{the_existence_jet}, where
\[
\tilde{m}=mc-\ell(5n-2)\geq m(c-5n+2)\geq m.
\]
\end{proof}

\bigskip
\newpage
\begin{center}
\printbibliography
\end{center}
\Addresses
\end{document}